\newcommand{\de}{\partial}
\newcommand{\db}{\overline{\partial}}
\newcommand{\ddbar}{\sqrt{-1} \partial \overline{\partial}}
\newcommand{\ov}[1]{\overline{#1}}
\newcommand{\mn}{\sqrt{-1}}
\newcommand{\ti}[1]{\tilde{#1}}
\newcommand{\vol}{\mathrm{Vol}}
\newcommand{\diam}{\mathrm{diam}}
\renewcommand{\leq}{\leqslant}
\renewcommand{\geq}{\geqslant}
\newtheorem{theorem}{Theorem}[section]
\theoremstyle{definition}
\title{Families of Calabi--Yau manifolds and canonical singularities}
\author{Valentino Tosatti}
\thanks{Research supported in part by a Sloan Research Fellowship and NSF grant DMS-1308988.}
\address{Department of Mathematics, Northwestern University, 2033 Sheridan Road, Evanston, IL 60208}
\email{tosatti@math.northwestern.edu}
\begin{document}
\begin{abstract} Given a polarized family of varieties over $\Delta$, smooth over $\Delta^{\times}$ and with smooth fibers Calabi-Yau, we show that the origin lies at finite Weil--Petersson distance if and only if after a finite base change the family is birational to one with central fiber a Calabi--Yau variety with at worst canonical singularities, answering a question of C.-L. Wang. This condition also implies that the Ricci-flat K\"ahler metrics in the polarization class on the smooth fibers have uniformly bounded diameter, or are uniformly volume non-collapsed.
\end{abstract}
\maketitle

\section{Introduction}
In this note we assume that $\pi:\mathfrak{X}\to\Delta$ is a flat projective family of $n$-dimensional varieties over the unit disc $\Delta\subset\mathbb{C}$, smooth over $\Delta^\times$,
with $\mathfrak{X}$ normal, $K_{\mathfrak{X}}$ Cartier and $K_{\mathfrak{X}/\Delta}\cong \mathcal{O}_{\mathfrak{X}}$. Therefore $X_t=\pi^{-1}(t)$ is a Calabi--Yau manifold for $t\neq 0$.
All our families are assumed to have connected fibers.

Let $\Omega$ be a nowhere vanishing holomorphic section of $K_{\mathfrak{X}/\Delta}$ and let $\Omega_t=\Omega|_{X_t}$ for $t\neq 0$.
Fix also $\mathcal{L}\to\mathfrak{X}$ a holomorphic line bundle with $L_t=\mathcal{L}|_{X_t}$ ample for all $t\in \Delta$. Thanks to the Calabi--Yau theorem, there exists a unique Ricci-flat K\"ahler metric $\omega_t$ on $X_t$ in the class $c_1(L_t)$, for $t\neq 0$.

Recall that in this setup there is a smooth semi-positive definite closed real $(1,1)$ form $\omega_{WP}$ on $\Delta^{\times}$, the Weil--Petersson form, which we can view as a possibly degenerate K\"ahler metric. Explicitly, we have
$$\omega_{WP}=-\mn\de_t\db_t\log\left((-1)^{\frac{n^2}{2}}\int_{X_t}\Omega_t\wedge\ov{\Omega_t}\right).$$
We can then use $\omega_{WP}$ to compute the length of curves in $\Delta^\times$, and so it makes sense to ask whether $0$ lies at finite or infinite distance. Note that in fact to define $\omega_{WP}$ the assumption that $K_{\mathfrak{X}/\Delta}\cong \mathcal{O}_{\mathfrak{X}}$ can be relaxed to just assuming that $K_{X_t}\cong \mathcal{O}_{X_t}$ for all $t\in \Delta^\times$, and taking $\Omega_t$ to be any trivializing section of $K_{X_t}$, since $\omega_{WP}$ is well-defined independent of the choice of $\Omega_t$.

Recall the definition of canonical and log terminal singularities. A normal projective variety $X$ has at worst canonical (resp. log terminal) singularities if the canonical sheaf $K_X$ is $\mathbb{Q}$-Cartier and for any resolution $\pi:\ti{X}\to X$ we have
$$K_{\ti{X}}\sim_{\mathbb{Q}} \pi^*K_X+\sum_E a_E E,$$
with $a_E\geq 0$ (resp. $a_E>-1$), where the sum ranges over all $\pi$-exceptional divisors $E$ (and $a_E\in\mathbb{Q}$).

Similarly, a pair $(X,\Delta)$ of a normal variety $X$ and a divisor $\Delta=\sum_i d_i D_i$ (with $d_i\in\mathbb{Q}$, $0\leq d_i\leq 1$, and the $D_i$'s are distinct prime divisors) has at worst plt (purely log terminal) singularities if $K_X+\Delta$ is $\mathbb{Q}$-Cartier and there exists a log resolution $\pi:\ti{X}\to X$ with
$$K_{\ti{X}}\sim_{\mathbb{Q}} \pi^*(K_X+\Delta)+\sum_E a_E E,$$
where the sum ranges over all prime divisors $E\subset \ti{X}$ (and $a_E\in\mathbb{Q}$), and $a_E>-1$ for all $\pi$-exceptional divisors $E$. It has at worst dlt (divisorially log terminal) singularities if this condition holds only for all $\pi$-exceptional divisors $E$ which lie above the non-snc locus of $(X,\Delta)$ (i.e. where this pair does not have simple normal crossings).

A Calabi--Yau variety with at worst canonical singularities is a normal projective variety $X$ with at worst canonical singularities and with $K_X$ Cartier and trivial.
This can be considered as the natural class of ``singular Calabi--Yau manifolds''. Thanks to \cite{EGZ} they admit a unique singular Ricci-flat K\"ahler metric in each K\"ahler class. Furthermore, we were informed by Y. Odaka that it follows from his work \cite{O1, O2} that a normal projective variety with $K_X$ trivial has at worst canonical singularities if and only if it is K-stable.

Given a family $\mathfrak{X}\to\Delta$ as above, C.-L. Wang proved in \cite{Wa} that if $X_0$ has at worst canonical singularities, then $0$ lies at finite Weil--Petersson distance from $\Delta^\times$. He also asked in \cite[Question 2.12]{Wa} whether a converse is true as well (see also \cite{Wi}), namely whether after a finite base change the family $\mathfrak{X}$ is birational to another family with central fiber a normal Calabi--Yau variety with at worst canonical singularities. In \cite{Wa2} he showed that this holds if the Minimal Model Program can be ran in dimension $n+1$.

Following the terminology of the Minimal Model Program, we will say that a family $\pi:\mathfrak{X}\to\Delta$ as above is relatively minimal if the pair $(\mathfrak{X},X_0)$ is dlt.
Our first result is the following:

\begin{theorem}\label{main} Let $\pi:\mathfrak{X}\to\Delta$ be a flat projective family of $n$-dimensional varieties over the unit disc $\Delta\subset\mathbb{C}$, smooth over
$\Delta^\times$ and
with $K_{\mathfrak{X}/\Delta}\cong \mathcal{O}_{\mathfrak{X}}$. Assume furthermore that this family is relatively minimal. Then, with the above notations, the following are equivalent:
\begin{itemize}
\item[(a)] $X_0$ has at worst canonical singularities
\item[(b)] There exists an irreducible component $D$ of $X_0$ with a resolution of singularities $\ti{D}\to D$
with $H^0(\ti{D},K_{\ti{D}})\neq 0$
\item[(c)] There is a constant $C>0$ such that $(-1)^{\frac{n^2}{2}}\int_{X_t}\Omega_t\wedge\ov{\Omega_t}\leq C$ for all $t\neq 0$
\item[(d)] There is a constant $C>0$ such that $\omega_t^n\geq C^{-1}(-1)^{\frac{n^2}{2}}\Omega_t\wedge\ov{\Omega_t}$ on $X_t$ for all $t\neq 0$.
\end{itemize}
Furthermore, any of $(a)-(d)$ implies the following equivalent statements:
\begin{itemize}
\item[(e)] There is a constant $C>0$ such that $\diam(X_t,\omega_t)\leq C$ for all $t\neq 0$
\item[(f)] There is a constant $C>0$ such that for any $x\in X_t$ and any $0<r\leq \diam(X_t,\omega_t)$ we have $\vol_{\omega_t}B_{\omega_t}(x,r)\geq C^{-1}r^{2n}$ for all
$t\neq 0$.
\end{itemize}
\end{theorem}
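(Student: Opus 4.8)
The plan is to dispose first of the two ``soft'' equivalences, then treat the algebraic core (a)$\Leftrightarrow$(b)$\Leftrightarrow$(c), and finally the geometric implication to (e)--(f). Set $f(t)=(-1)^{n^2/2}\int_{X_t}\Omega_t\wedge\ov{\Omega_t}$ and $V=\int_{X_t}\omega_t^n=(c_1(L_t))^n$, which is constant in the flat family. I would record that (c)$\Leftrightarrow$(d) is immediate from Ricci-flatness: $\omega_t^n$ is a constant multiple of $(-1)^{n^2/2}\Omega_t\wedge\ov{\Omega_t}$, and comparing total masses gives $\omega_t^n=\frac{V}{f(t)}(-1)^{n^2/2}\Omega_t\wedge\ov{\Omega_t}$, so (d) says exactly $f(t)\le CV$, i.e.\ (c). Likewise (e)$\Leftrightarrow$(f) is elementary for $\Ric=0$: taking $r=\diam(X_t,\omega_t)$ in (f) and using $\vol_{\omega_t}X_t=V$ gives $\diam\le(CV)^{1/(2n)}$, while conversely Bishop--Gromov monotonicity of $r\mapsto \vol_{\omega_t}B_{\omega_t}(x,r)/r^{2n}$ together with $\vol_{\omega_t}X_t=V$ and (e) yields the lower bound in (f). So it remains to prove (a)$\Leftrightarrow$(b)$\Leftrightarrow$(c) and that (a) implies (e).

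For the algebraic core I would first note that relative minimality forces $X_0$ to be reduced (a dlt boundary has coefficients $\le 1$) and $(\mathfrak X,X_0)$ log canonical with $X_0$ an lc centre. Passing to a log resolution $\mu:\mathfrak Y\to\mathfrak X$ on which $t$ becomes a monomial $t=\prod_j z_j^{m_j}$ and writing $\mu^*\Omega=(\text{unit})\prod_j z_j^{a_j}\,dz$, a direct computation of $\int_{Y_t}(\mu^*\Omega)_t\wedge\ov{(\mu^*\Omega)_t}$ in these coordinates (monomialization plus a Mellin-transform analysis of the model integral) gives $f(t)\sim C\,(\log|t|^{-1})^{d}$, where $d=\dim$ of the dual complex of $X_0$. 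Here log canonicity forces $\min_j(a_j+1)/m_j\ge 1$ and the reduced strict transforms attain the value $1$, so the exponent of $|t|$ is $0$ and only the logarithmic factor survives. Hence (c) holds iff $d=0$, i.e.\ the dual complex is a point.

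To identify this with (a) and (b), I would use adjunction for the dlt pair: each component $(D_i,\mathrm{Diff}_i)$ of $X_0$ is log Calabi--Yau, and a component $D=D_i$ satisfies $H^0(\ti D,K_{\ti D})\ne 0$ if and only if the limiting (residue) $n$-form of $\Omega$ along $D$ extends holomorphically, equivalently $\mathrm{Diff}_i=0$ and $D$ has canonical singularities with $K_D$ trivial (if $\mathrm{Diff}_i\ne0$ then $K_{D_i}\sim_{\mathbb Q}-\mathrm{Diff}_i$ is strictly anti-effective and $\ti D$ is uniruled, killing $H^0(K_{\ti D})$). But $\mathrm{Diff}_i=0$ means $D$ meets no other component, so by connectedness $X_0=D$ is a Calabi--Yau variety with canonical singularities, giving (a); conversely (a) gives (b) with $D=X_0$, whose resolution has $H^0(K)=H^0(\mathcal O_{X_0})\ne0$ since canonical singularities let pluricanonical forms extend. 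Finally $X_0=D$ irreducible with trivial different is exactly the statement that the dual complex is a point, i.e.\ $d=0$, which is (c). This closes (a)$\Leftrightarrow$(b)$\Leftrightarrow$(c). For (a)$\Rightarrow$(e) I would invoke the convergence theory for Ricci-flat metrics: by (a), $X_0$ is a Calabi--Yau variety with canonical singularities carrying the singular Ricci-flat metric $\omega_0$ of \cite{EGZ}, whose metric completion is a compact length space, and the $\omega_t$ converge to $(X_0,\omega_0)$ in the Gromov--Hausdorff sense (Rong--Zhang and Donaldson--Sun type convergence, using the uniform non-collapsing furnished by (d)); compactness of the limit forces $\diam(X_t,\omega_t)\le C$, which is (e).

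The main obstacle is precisely this last step. The hypothesis (d) is a purely \emph{global} volume lower bound, and on its own it only reproves (c) upon integration; extracting a uniform \emph{diameter}, or equivalently the scale-invariant local non-collapsing (f), requires controlling the local geometry of $\omega_t$ as $t\to 0$, which is where the deep Gromov--Hausdorff convergence and the compactness of the limiting singular Calabi--Yau space must enter. A secondary technical point is making the period asymptotics $f(t)\sim C(\log|t|^{-1})^{d}$ and the identification of $d$ with the dimension of the dual complex fully rigorous, together with the adjunction and different bookkeeping that pins down exactly when a component carries a genuine holomorphic $n$-form.
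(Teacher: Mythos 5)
Your treatment of the two soft equivalences $(c)\Leftrightarrow(d)$ and $(e)\Leftrightarrow(f)$ matches the paper. For the algebraic core you take a genuinely different route: the paper proves $(c)\Rightarrow(a)$ by a short adjunction argument (if two components of $X_0$ meet, $\Omega_0|_{D_i}$ acquires a pole of order $\geq 1$ along $D_i\cap D_j$, contradicting $\int_{X_0^{reg}}|\Omega_0|^2<\infty$; then irreducibility plus integrability gives log terminal by \cite[Lemma 6.4]{EGZ}, hence canonical since $K_{X_0}$ is Cartier), it quotes Gross's appendix to \cite{RZ} for $(a)\Rightarrow(c)$, and proves $(b)\Leftrightarrow(a)$ by the pullback-versus-exceptional argument you also sketch (your detour through uniruledness via BDPP is heavier than needed but works). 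Your proposed asymptotic $f(t)\sim C(\log|t|^{-1})^{d}$ with $d$ the dimension of the dual complex would indeed yield $(a)\Leftrightarrow(c)$ in one stroke, but it is itself a substantial theorem (essentially the content of Gross's appendix and of Boucksom--Jonsson/Nicaise--Xu type results): the ``monomialization plus Mellin transform'' sketch leaves unaddressed the non-snc locus of the dlt model, the identification of the $|t|$-exponent as zero, and the finiteness of the leading coefficient when $d=0$ (which is exactly the log-terminal integrability the paper extracts from \cite{EGZ}). So this part is a plausible but unproved alternative, not a complete argument.

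The genuine gap is the implication $(c)\Rightarrow(e)$, and you have correctly located it but not closed it. Your plan --- deduce a uniform diameter bound from Gromov--Hausdorff convergence of $(X_t,\omega_t)$ to the compact metric completion of $(X_0^{reg},\omega_0)$ --- is circular: the convergence results of Rong--Zhang and the Donaldson--Sun/Cheeger--Colding machinery take a uniform diameter bound (or the scale-invariant ball-volume bound $(f)$) as an \emph{input}, and $(d)$ is only a pointwise volume-form inequality, which upon integration merely recovers $(c)$; it does not by itself give $\vol_{\omega_t}B_{\omega_t}(x,r)\geq C^{-1}r^{2n}$. The paper closes this step by citing \cite[Theorem 2.1]{RZ}, which proves the diameter bound \emph{directly} from the lower bound $(d)$ on $\omega_t^n$ (by comparison with the restricted Fubini--Study metrics $\omega_{FS,t}$ and estimates on the Monge--Amp\`ere equation), and only then derives $(f)$ from $(e)$ via Bishop--Gromov. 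Without an independent proof of that diameter estimate, your argument for $(a)\Rightarrow(e)$ does not go through.
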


Note that by inversion of adjunction \cite[Theorem 5.50]{KM} if a family as above has central fiber $X_0$ with at worst canonical singularities, then up to shrinking the disc $\Delta$ we have that $(\mathfrak{X},X_0)$ is dlt (in fact even plt), so the family is relatively minimal.
Building on the recent advances in the Minimal Model Program \cite{BCHM, Fu, HX, Lai, NX}, we will use Theorem \ref{main} to show that Wang's question \cite[Question 2.12]{Wa} has an affirmative answer in general.

\begin{theorem}\label{cor}
Let $\pi:\mathfrak{X}\to\Delta$ be a flat projective family of $n$-dimensional varieties over the unit disc $\Delta\subset\mathbb{C}$, smooth over
$\Delta^\times$ and
with $K_{X_t}\cong \mathcal{O}_{X_t}$ for all $t\in\Delta^{\times}$. Then $0$ is at finite Weil--Petersson distance from $\Delta^\times$ if and only if
after a finite base change the family is birational to another family with central fiber a Calabi--Yau variety with at worst canonical singularities.
\end{theorem}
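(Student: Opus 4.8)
The plan is to deduce both implications from Theorem \ref{main}, the point being to reduce an arbitrary family to a relatively minimal one to which that theorem applies. The preliminary observation is that the property ``$0$ is at finite Weil--Petersson distance'' is invariant under the two operations we are allowed to perform. A birational map between two families which is an isomorphism over $\Delta^\times$ does not change $\omega_{WP}$ at all, since $\omega_{WP}$ is built only from $(-1)^{n^2/2}\int_{X_t}\Omega_t\wedge\ov{\Omega_t}$ for $t\neq 0$, and birational Calabi--Yau manifolds are isomorphic in codimension one, so their holomorphic volume forms and hence these periods agree. For a finite base change $t=s^k$ one uses the asymptotic expansion $f(t):=(-1)^{n^2/2}\int_{X_t}\Omega_t\wedge\ov{\Omega_t}\sim C|t|^{2\kappa}(-\log|t|)^m$ coming from the nilpotent orbit theorem: the dominant contribution to $\omega_{WP}=-\mn\de_t\db_t\log f$ is $\omega_{WP}\sim \frac{m}{4|t|^2(\log|t|)^2}\,\mn\, dt\wedge d\ov t$, whose length element has finite integral to $0$ if and only if $m=0$, an integer left unchanged by the substitution $t=s^k$. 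Hence finiteness of the distance survives both operations.

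For the ``if'' direction, suppose that after a finite base change the family is birational to a family $\mathfrak{X}'\to\Delta$ whose central fiber $X_0'$ is a Calabi--Yau variety with at worst canonical singularities. By inversion of adjunction, as noted after Theorem \ref{main}, the pair $(\mathfrak{X}',X_0')$ is dlt (in fact plt) after shrinking $\Delta$, so $\mathfrak{X}'$ is relatively minimal and satisfies hypothesis (a). Theorem \ref{main} then gives (c), i.e. the period integral of $\mathfrak{X}'$ is bounded; since for a relatively minimal family the leading exponent $\kappa$ vanishes, boundedness forces $m=0$, so $0$ is at finite distance for $\mathfrak{X}'$, and by the invariance above also for the original family.

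The substantive direction is the converse. Starting from a family with $0$ at finite distance, I would first apply semistable reduction after a finite base change to obtain $\mathfrak{Y}\to\Delta$ with $\mathfrak{Y}$ smooth and $Y_0$ reduced simple normal crossings. Running the relative Minimal Model Program over $\Delta$ for $K_{\mathfrak{Y}/\Delta}$ --- this is where \cite{BCHM,Fu,HX,Lai,NX} enter --- produces a relatively minimal model $\mathfrak{X}'\to\Delta$, birational to $\mathfrak{Y}$ and an isomorphism over $\Delta^\times$, with $(\mathfrak{X}',X_0')$ dlt and $K_{\mathfrak{X}'/\Delta}\cong\mathcal{O}_{\mathfrak{X}'}$. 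By the invariance of the first paragraph, $0$ is still at finite Weil--Petersson distance for $\mathfrak{X}'$, which forces $m=0$; as the central fiber of the relatively minimal $\mathfrak{X}'$ is reduced we have $\kappa=0$, so $f'$ is bounded, i.e. (c) holds. Theorem \ref{main} then yields (a): the central fiber $X_0'$ is a Calabi--Yau variety with at worst canonical singularities, and $\mathfrak{X}'$ is the desired family.

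I expect the main obstacle to be the Minimal Model Program reduction in the converse: producing, after a single finite base change, a relatively minimal model with $(\mathfrak{X}',X_0')$ dlt and $K_{\mathfrak{X}'/\Delta}$ genuinely Cartier and trivial, rather than merely $\mathbb{Q}$-trivial, so that Theorem \ref{main} applies verbatim. This is precisely the step that Wang \cite{Wa2} could only carry out conditionally on the Minimal Model Program in dimension $n+1$, and the role of the recent advances is to make it unconditional. A secondary technical point is the exact bridge between finiteness of the Weil--Petersson distance and boundedness of the period integral: this rests on the asymptotic expansion of $f$ and on the vanishing of $\kappa$ for relatively minimal families, which I would justify from the reducedness of the central fiber forced by the dlt condition together with $K_{\mathfrak{X}'/\Delta}\cong\mathcal{O}_{\mathfrak{X}'}$.
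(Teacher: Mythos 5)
Your overall architecture matches the paper's: establish invariance of finite Weil--Petersson distance under base change and under birational modifications over $\Delta^\times$, perform semistable reduction, run the relative MMP to produce a relatively minimal model $\mathfrak{X}'$ with $K_{\mathfrak{X}'/\Delta}\cong\mathcal{O}_{\mathfrak{X}'}$, and then invoke Theorem \ref{main}. The genuine gap is in the last step: you propose to verify condition $(c)$ of Theorem \ref{main} by arguing that finite Weil--Petersson distance forces the period $f(t)=(-1)^{n^2/2}\int_{X_t}\Omega_t\wedge\ov{\Omega_t}$ to be bounded, via an asymptotic expansion $f(t)\sim C|t|^{2\kappa}(-\log|t|)^m$ with ``finite distance iff $m=0$'' and ``$\kappa=0$ by reducedness.'' None of this is actually proved, and it is exactly the hard part. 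Since $\log|t|$ is pluriharmonic, $\kappa$ is invisible to $\omega_{WP}$, so you must separately show $\kappa=0$ for the global trivialization $\Omega$ of $K_{\mathfrak{X}'/\Delta}$ on the \emph{minimal model} (which may have non-snc, merely dlt, central fiber and singular total space); the statement that finite distance is equivalent to $m=0$ is the content of Wang's Hodge-theoretic theorem, not a formal computation with the ``dominant term''; and even granting both, concluding that $f$ is bounded requires control of all terms of the expansion, not just the leading one. The same unproved bridge is used in your ``if'' direction ($f$ bounded does not formally imply $-\ddbar\log f$ has finite-length paths to $0$), where the paper simply cites Wang's Theorem C.

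The paper circumvents all of this by verifying condition $(b)$ of Theorem \ref{main} instead of $(c)$. Wang's Theorem 2.5 already converts ``finite Weil--Petersson distance'' into algebraic geometry on the \emph{semistable} model: it holds iff some irreducible component $D_1$ of the snc central fiber satisfies $H^0(D_1,K_{D_1})\neq 0$. The only remaining point is to transfer this to the minimal model $\mathfrak{X}'$, which is done by observing that the MMP cannot contract the proper transform of $D_1$, since contracted divisors are uniruled (Kawamata) while $D_1$ carries a holomorphic $n$-form; birational invariance of $h^{n,0}$ then gives $(b)$ for $\mathfrak{X}'$. This is the missing idea in your proposal. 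Secondarily, you correctly flag but do not resolve the upgrade from $K_{\mathfrak{X}'/\Delta}\sim_{\mathbb{Q}}0$ (what the MMP delivers) to $K_{\mathfrak{X}'/\Delta}\cong\mathcal{O}_{\mathfrak{X}'}$ (what Theorem \ref{main} needs); the paper does this via the torsion-free pushforward $\pi_*\mathcal{O}_{\mathfrak{X}'}(K_{\mathfrak{X}'/\Delta})$ and the negativity lemma, showing the discrepancy divisor is a rational multiple of the reduced fiber $X_0'$ and hence a pullback.
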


In this theorem the family with central fiber a Calabi--Yau variety with at worst canonical singularities is a relatively minimal model of a semistable reduction of
the initial family. Moreover, this family is uniquely determined by its restriction over $\Delta^\times$, as follows for example from \cite[Theorem 2.1]{Al} or \cite[Lemma 7.2]{HX}.

It is an interesting problem to decide whether in Theorem \ref{main} we also have that $(f)$ implies $(a)$\footnote{Added in proof: after this work was completed, Takayama succeeded in proving this implication in \cite{Ta2}, using \cite{DS}, \cite{Ta1} and this work.}.
This would give an algebro-geometric characterization of volume non-collapsing for families of Calabi--Yau manifolds, a problem which was mentioned by Donaldson--Sun \cite[p.2]{DS}.
Note that if $(f)$ holds, then the manifolds $(X_t,\omega_t), t\neq 0$ satisfy the assumptions of \cite[Theorem 1.2]{DS}, and we conclude that they can all be embedded in $\mathbb{CP}^N$ by sections of $L_t^{k}$ for some fixed integers $k,N$. Furthermore, there is a sequence $t_i\to 0$ such that the images
$X_{t_i}\subset \mathbb{CP}^N$ (modulo projective transformations) have as flat limit a normal projective variety $W$ with at worst log-terminal singularities (by \cite[Proposition 4.15]{DS}). If we could prove that $W\cong X_0$, then it would follow that $X_0$ has at worst canonical singularities (since $K_{X_0}$ is Cartier).

If the central fiber $X_0$ of a family as above has at worst canonical singularities, then in fact we know quite a lot about the behavior of the Ricci-flat K\"ahler manifolds $(X_t,\omega_t)$ as $t$ approaches zero.
Thanks to a theorem of Rong--Zhang \cite[Theorem 1.4]{RZ}, $(X_t,\omega_t)$ converge smoothly in the sense of Cheeger--Gromov as $t\to 0$
to an incomplete Ricci-flat K\"ahler metric $\omega_0$ on $X_0^{reg}$. In fact, $\omega_0$ extends to a ``singular Ricci-flat metric'' on $X_0$ in the sense of \cite{EGZ}.
Furthermore, Rong--Zhang (\cite[Theorem 1.1]{RZ} and \cite{RZ2}) show that the metric spaces $(X_t,\omega_t)$ converge in the Gromov--Hausdorff sense to a compact metric space $(Z,d)$ which is isometric to the metric completion of $(X_0^{reg},\omega_0)$. We also have that $Z$ is homeomorphic to $W$ thanks to Donaldson--Sun \cite{DS}.

If the family is relatively minimal and $X_0$ has worse than canonical singularities, e.g. if $X_0$ is a large complex structure limit (in the sense of \cite{Mo}), then $\diam(X_t,\omega_t)$ is expected to diverge\footnote{Added in proof: this has now been proved by Takayama \cite{Ta2}.}. In particular, the rescaled unit-diameter Ricci-flat K\"ahler metrics $\ti{\omega}_t=(\diam(X_t,\omega_t))^{-2}\omega_t$ would be volume collapsing. In the case of a large complex structure limit, Kontsevich--Soibelman \cite{KS} and Gross--Wilson \cite{GW} conjectured that the Gromov--Hausdorff limit of $(X_t,\ti{\omega}_t)$ is a Riemannian manifold of half dimension outside of a singular set of Hausdorff codimension at least $2$.
In the Strominger--Yau--Zaslow picture of mirror symmetry \cite{SYZ}, this limit should be the base of a special Lagrangian torus fibration on $X_t$ for $|t|$ small. This conjecture was proved for $K3$ surfaces in \cite{GW} under an assumption on the singular elliptic fibers, and in \cite{GTZ2} in general. In the case of large complex structure limits of certain hyperk\"ahler manifolds, this conjecture (except the statement about the singular set) was proved in \cite{GTZ}.

\section{Proofs of the main results}
\begin{proof}[Proof of Theorem \ref{main}]
$(a)\Rightarrow (b)$: $X_0$ is an irreducible Calabi--Yau variety with at worst canonical singularities, so if $f:\ti{X}_0\to X_0$ is a resolution of singularities, then we have
$$K_{\ti{X}_0}\sim \sum_E a_E E,$$
with $a_E\geq 0$. It follows that $K_{\ti{X}_0}$ is effective, as desired.

$(a)\Rightarrow (c)$ is Theorem B.1 in the appendix by M. Gross to Rong--Zhang \cite{RZ}.

$(c)\Leftrightarrow (d)$ is elementary: the fact that $\omega_t$ is Ricci-flat is equivalent to the complex Monge--Amp\`ere equation on $X_t, t\neq 0$,
\begin{equation}\label{vol1}
\omega_t^n=e^{c_t} (-1)^{\frac{n^2}{2}}\Omega_t\wedge\ov{\Omega_t},
\end{equation}
where $c_t\in\mathbb{R}$. Integrating this over $X_t$ we have
$$\int_{X_t}c_1(L_t)^n=\int_{X_t}\omega_t^n=e^{c_t} (-1)^{\frac{n^2}{2}}\int_{X_t}\Omega_t\wedge\ov{\Omega_t}.$$
Flatness of the family $\mathfrak{X}$ implies that $\int_{X_t}c_1(L_t)^n$ is constant in $t$. Hence $(c)$ is equivalent to $c_t\geq -C$ for all $t\neq 0$, which is in turn equivalent to $(d)$.

Let us also remark here that we always have that $c_t\leq C$ as $t$ approaches $0$. Indeed, a simple argument (see \cite[Theorem B.1 (i)]{RZ}) shows that
\begin{equation}\label{vol2}
(-1)^{\frac{n^2}{2}}\Omega_t\wedge\ov{\Omega_t}\geq C^{-1}\omega^n_{FS,t},
\end{equation}
where $\omega_{FS,t}$ is obtained by taking an embedding $\mathfrak{X}\subset \Delta\times\mathbb{CP}^N$ of the family with $m\mathcal{L}$ equal to the pullback of the hyperplane bundle, and
restricting $\frac{1}{m}$ times the Fubini--Study metric to $X_t$. In particular, we have that $\int_{X_t}\omega^n_{FS,t}=\int_{X_t}\omega_t^n$. Integrating \eqref{vol1} and \eqref{vol2} over $X_t$ then gives the desired bound $c_t\leq C$.

$(c)\Rightarrow (a)$: we learned this argument from S. Boucksom. Since $(\mathfrak{X},X_0)$ is dlt, we have that $X_0=\sum_{i}D_i$ is a reduced divisor with irreducible components $D_i$ which are normal (by \cite[Corollary 5.52]{KM}), and $X_0$ has simple normal crossings at the generic point of each component of $\cap_{j\in J}D_j$ for each $J\subset I$ (see \cite[4.16]{Ko}).
Assumption $(c)$ implies that
\begin{equation}\label{bound}
(-1)^{\frac{n^2}{2}}\int_{X_0^{reg}}\Omega_0\wedge\ov{\Omega_0}<\infty,
\end{equation}
where $\Omega_0=\Omega|_{X_0}$.
If $X_0$ is not irreducible, then there are two intersecting components $D_i\neq D_j$. By adjunction the restriction of $\Omega_0$ to $D_i$ has a pole of order at least $1$ along $D_i\cap D_j$, which is a smooth hypersurface of $D_i$ at the generic point of $D_i\cap D_j$. This contradicts \eqref{bound}.

Hence $X_0$ is irreducible, and as we remarked earlier this implies that $X_0$ is normal. Then \eqref{bound} is well-known to be equivalent to $X_0$ having at worst log-terminal singularities (see e.g. \cite[Lemma 6.4]{EGZ}). Therefore, given any resolution $f:\ti{X}_0\to X_0$ we have
$$K_{\ti{X}_0}\sim_{\mathbb{Q}} f^*K_{X_0}+\sum_E a_E E,$$
with $a_E>-1$. But since we assume that $K_{X_0}$ is Cartier, the coefficients $a_E$ are integers and therefore $a_E>-1$ is equivalent to $a_E\geq 0$, which shows that $X_0$ has at worst
canonical singularities.

$(b)\Rightarrow (a)$ is similar to the proof of \cite[Proposition 1.2]{Wa2}: we write again $X_0=\sum_{i}D_i$ as before, with $f:\ti{D}_1\to D_1$ a resolution of singularities with $H^0(\ti{D}_1,K_{\ti{D}_1})\neq 0$. First we show that $X_0=D_1$ is irreducible. If not,
since $X_0$ is connected, $D_1$ intersects $\sum_{j\neq 1}D_j.$ By adjunction we have that $K_{D_1}=-\sum_{j\neq 1}D_j|_{D_1}$, so $-K_{D_1}$ is effective and nontrivial. But then $K_{\ti{D}_1}\sim -f^*(\sum_{j\neq 1}D_j|_{D_1})+\sum_E a_E E$ (for some $a_E\in\mathbb{Z}$) cannot be effective because the divisors $E$ are $f$-exceptional, while $-f^*(\sum_{j\neq 1}D_j|_{D_1})$ is a pullback.

Hence $X_0=D_1$ is an irreducible normal variety with $K_{D_1}$ trivial, and $K_{\ti{D}_1}\sim \sum_E a_E E$ is effective, which implies that $a_E\geq 0$ and therefore $X_0$ has at worst canonical singularities.

$(c)\Rightarrow (e)$ follows from \cite[Theorem 2.1]{RZ}.

$(e)\Rightarrow (f)$: the Bishop--Gromov volume comparision theorem implies that for any $x\in X_t$ and any $0<r\leq \diam(X_t,\omega_t)$ we have
$$\frac{\vol_{\omega_t}B_{\omega_t}(x,r)}{c_nr^{2n}}\geq \frac{\int_{X_t}\omega_t^n}{c_n (\diam(X_t,\omega_t))^{2n}}\geq C^{-1},$$
using again that $\int_{X_t}\omega_t^n=\int_{X_t}c_1(L_t)^n$ is constant in $t$.

$(f)\Rightarrow (e)$ follows from a well-known elementary argument (see e.g. \cite[Lemma 3.3]{To}), using that the total volume of $(X_t,\omega_t)$ is constant. Indeed, if two points in $X_t$
are at very large $\omega_t$-distance, we can join them by a minimizing geodesic and construct many disjoint unit-size balls centered at points on the geodesic. By the volume non-collapsing hypothesis $(f)$, there cannot be too many such balls, since the total volume is bounded.
\end{proof}

\begin{proof}[Proof of Theorem \ref{cor}]
The condition that $0$ is at finite Weil--Petersson distance from $\Delta^\times$ is clearly unaffected by finite base changes. It is also unaffected by changing our family $\mathfrak{X}$ via a birational map which is an isomorphism over $\Delta^\times$. Indeed, let $\Phi:\mathfrak{X}'\dashrightarrow \mathfrak{X}$ be a birational map (over $\Delta$) between two such families, which induces isomorphisms $\Phi_t:X'_t\to X_t$ for $t\neq 0$. As before pick $\Omega_t, \Omega'_t$ trivializing sections of $K_{X_t}$ and $K_{X'_t}$, for $t\in\Delta^\times$. Then
$\Phi_t^*\Omega_t=a_t \Omega'_t$, with $a_t$ a never-vanishing holomorphic function on $\Delta^\times$. Therefore
$$(-1)^{\frac{n^2}{2}}\int_{X_t}\Omega_t\wedge\ov{\Omega_t}=|a_t|^2(-1)^{\frac{n^2}{2}}\int_{X'_t}\Omega'_t\wedge\ov{\Omega'_t},$$
and taking $-\ddbar\log$ of this equality shows that the two Weil--Petersson metrics we obtain on $\Delta^\times$ are equal.

Therefore one direction of this theorem follows from \cite[Theorem C]{Wa}. Conversely, we assume that $0$ is at finite Weil--Petersson distance from $\Delta^\times$.
Thanks to Mumford's semistable reduction theorem \cite{KKMS}, we may assume that $X_0$ is a snc divisor in $\mathfrak{X}$, in particular reduced.

We now use recent results in the Minimal Model Program, in the form given by \cite[Theorem 2.2.6]{NX} or \cite[Theorem 1.1]{Fu} (which rely on \cite{BCHM, HX, Lai}), to conclude that
our family $\mathfrak{X}\to\Delta$ is birational to a family $\pi:\mathfrak{X}'\to\Delta$, smooth over $\Delta^\times$,
with $\mathfrak{X}'$ normal, $K_{\mathfrak{X}'}$ $\mathbb{Q}$-Cartier, $K_{\mathfrak{X}'/\Delta}\sim_{\mathbb{Q}} 0$, which is also relatively minimal
(i.e. $(\mathfrak{X}',X'_0)$ is dlt). The birational map $\mathfrak{X}'\dashrightarrow \mathfrak{X}$ is an isomorphism over $\Delta^\times$, since the fibers $X_t,t\neq 0$ were already smooth Calabi--Yau manifolds and the birational map is obtained (through a minimal model program with scaling) by contracting relative extremal rays.

We now observe that in fact we have that $K_{\mathfrak{X}'/\Delta}\cong \mathcal{O}_{\mathfrak{X}'}$. This follows from the argument in \cite[Theorem 1.1]{Fu} (or \cite[Theorem 4.1.4]{NX}), as was explained to us by S. Boucksom. Indeed, we have that $K_{\mathfrak{X}'/\Delta}$ is $\mathbb{Q}$-Cartier and trivial over the generic fiber, so the sheaf $\mathcal{O}_{\mathfrak{X}'}(K_{\mathfrak{X}'/\Delta})$ is torsion-free. Therefore its pushforward $\pi_*\mathcal{O}_{\mathfrak{X}'}(K_{\mathfrak{X}'/\Delta})$
is also torsion-free, it has rank $1$ since the rank is computed at a generic point, and it lives on a curve hence it is a line bundle $L$. The natural injection $\pi^*L\to \mathcal{O}_{\mathfrak{X}'}(K_{\mathfrak{X}'/\Delta})$ is a surjection outside $X'_0$, hence $K_{\mathfrak{X}'/\Delta}=\pi^*L+D$, where $D$ is a Weil divisor supported on $X'_0$, which is also $\mathbb{Q}$-Cartier since it is a difference of two
$\mathbb{Q}$-Cartier divisors. Since $K_{\mathfrak{X}'/\Delta}$ is relatively nef, so is $D$, and the ``negativity lemma'' \cite[Lemma III.8.2]{BHPV} implies that $D=rX'_0$ for some $r\in\mathbb{Q}$. Since $X'_0$ is reduced, we have $D=\pi^*(r 0)$ which shows that $K_{\mathfrak{X}'/\Delta}$ is Cartier and (relatively) trivial.

Therefore, the family $\mathfrak{X}'\to \Delta$ satisfies the hypotheses of Theorem \ref{main}. We will show that Theorem \ref{main} $(b)$ holds, which implies that $X'_0$ has at worst canonical singularities, as required. Recall that without loss of generality we can assume that the family $\mathfrak{X}\to \Delta$ is semistable. Then \cite[Theorem 2.5]{Wa} shows that $0$ has finite Weil--Petersson distance from $\Delta^\times$ if and only if there is an irreducible component $D_1$ of $X_0$ with $H^0(D_1, K_{D_1})\neq 0$.
The birational map $\mathfrak{X}'\dashrightarrow \mathfrak{X}$ must induce a birational map from some component $D'_1$ of $X'_0$ to $D_1$, because otherwise the proper transform
of $D_1$ would be contracted at some step of the minimal model program with scaling, and hence $D_1$ would be uniruled \cite{Ka} which contradicts $H^0(D_1, K_{D_1})\neq 0$. Hence
$D'_1$ is birational to $D_1$, and so is any resolution $\ti{D}'_1$ of $D_1$. By the birational invariance of $h^{n,0}$, we conclude that $H^0(\ti{D}'_1, K_{\ti{D}'_1})\neq 0$,
and so the assumption in Theorem \ref{main} (b) holds.
\end{proof}

{\bf Acknowledgements. }I would like to thank S. Boucksom for pointing out a mistake in a previous version of this note, and for kindly allowing me to include some of his observations.
I would also like to thank T.C. Collins, H.-J. Hein, Y. Odaka, J. Song, G. Sz\'ekelyhidi and Y. Zhang for helpful discussions, and the referee for useful suggestions.

 \end{document}